\definecolor{gr}{rgb}   {0.,   0.69,   0.23 }
\definecolor{bl}{rgb}   {0.,   0.5,   1. }
\definecolor{mg}{rgb}   {0.85,  0.,    0.85}
\definecolor{or}{rgb}   {0.9,  0.5,   0.}
\definecolor{webred}{rgb}{0.75,0,0}
\definecolor{webgreen}{rgb}{0,0.75,0}
\newtheorem{theorem}{Theorem}[section]
\newtheorem{proposition}[theorem]{Proposition}
\newtheorem{lemma}[theorem]{Lemma}
\newtheorem{example}[theorem]{Example}
\theoremstyle{definition}
\newtheorem{definition}[theorem]{Definition}
\theoremstyle{remark}
\newcommand{\ba}{\begin{array}}
\newcommand{\ea}{\end{array}}
\newcommand{\N}{\mathbb{N}}
\newcommand{\Q}{\mathbb{Q}}
\newcommand{\R}{\mathbb{R}}
\renewcommand{\S}{\mathbb{S}}
\newcommand{\one}{\mathds{1}}
\newcommand{\cO}{\mathcal{O}}
\newcommand{\bel}{\begin{equation} \label}
\newcommand{\ee}{\end{equation}}
\newcommand\supp{\operatorname{supp}}
\def\<{\langle}
\def\>{\rangle}
\newcommand{\fract}[2]{\genfrac{}{}{0pt}{}{\scriptstyle #1}{\scriptstyle #2}}
\begin{document}

\title[Eigenvalues for degenerate wells]{Low-lying eigenvalues of semiclassical Schr\"odinger operator with degenerate wells}

\author[J.-F. Bony]{Jean-Fran\c{c}ois Bony}
\address{IMB, Universit\'e de Bordeaux, UMR 5251, 33405 Talence, France}
\email{bony@math.u-bordeaux.fr}

\author[N.\ Popoff]{Nicolas Popoff}
\address{IMB, Universit\'e de Bordeaux, UMR 5251, 33405 Talence, France}
\email{nicolas.popoff@math.u-bordeaux.fr}

\subjclass[2010]{35J10,81Q10,35P20}
\keywords{semiclassical Schr\"odinger operator, eigenvalues asymptotic, degenerate potentials}

\begin{abstract}
In this article, we consider the semiclassical Schr\"odinger operator $P = - h^{2} \Delta + V$ in $\R^{d}$ with confining non-negative potential $V$ which vanishes, and study its low-lying eigenvalues $\lambda_{k} ( P )$ as $h \to 0$. First, we give a necessary and sufficient criterion upon $V^{-1} ( 0 )$ for $\lambda_{1} ( P ) h^{- 2}$ to be bounded. When $d = 1$ and $V^{-1} ( 0 ) = \{ 0 \}$, we are able to control the eigenvalues $\lambda_{k} ( P )$ for monotonous potentials by a quantity linked to an interval $I_{h}$, determined by an implicit relation involving $V$ and $h$. Next, we consider the case where $V$ has a flat minimum, in the sense that it vanishes to infinite order. We give the asymptotic of the eigenvalues: they behave as the eigenvalues of the Dirichlet Laplacian on $I_{h}$. Our analysis includes an asymptotic of the associated eigenvectors and extends in particular cases to higher dimensions.
\end{abstract}

\maketitle

\section{Introduction}

The paper is devoted to the study of the spectrum of the Schr\"odinger operator 
\begin{equation} \label{a25}
P = - h^{2} \Delta + V ,
\end{equation}
with semiclassical parameter $h > 0$, acting in $L^{2}(\R^{d})$. The potential $V : \R^{d}\to \R_{+}$ is continuous and satisfies $\liminf_{\vert x \vert \to + \infty} V ( x ) > 0$ and $\inf V = \min V = 0$. The operator $P$ is self-adjoint and non-negative. For such an operator $T$, let $\lambda_{k} ( T )$ be its $k$-th eigenvalue or the bottom of its essential spectrum if $T$ has less than $k$ eigenvalues before it. The asymptotic of the eigenvalues of $P$ as $h \to 0$ has received large considerations from basis of quantum mechanics to microlocal analysis. For fixed $k$, they converge to $0$ as $h \to 0$ and their asymptotic behavior depends on $V^{- 1} ( 0 )$ and on the shape of $V$ near this set.

We first consider the general behavior of $\lambda_{1} ( P )$ under weak assumptions. If $V^{-1} ( 0 )$ contains an open set, it is clear from the maximin principle that $\lambda_{1} ( P ) = \cO ( h^{2} )$. But if $V^{- 1} ( 0 )$ has measure $0$, then $\lambda_{1} ( P ) h^{- 2}$ is unbounded, see \cite[Lemma 3.2]{BelHelVer01}. We will give in section \ref{s2} a characterization on $V^{-1} ( 0 )$ for which $\lambda_{1} ( P ) h^{- 2} \to + \infty$ as $h \to 0$: this is true if and only if $V^{- 1} ( 0 )$ is $1$-null, where the notion of $1$-nullity, coming from \cite{AdaHed96,HewMoi17}, is introduced in Definition \ref{a26}.

Numerous works focus on the case where $V^{- 1} ( 0 )$ is reduced to a point, let us say $V^{- 1} ( 0 ) = \{ 0 \}$. In dimension $d = 1$, the most well-known model case is given by the quadratic harmonic oscillator $V ( x ) = x^{2}$, for which $\lambda_{k} ( P ) = ( 2 k - 1 ) h$. Next, the standard harmonic approximation states that if $V$ is $C^{\infty} ( \R )$ and if $V$ is non-degenerate at $0$, in the sense that $V^{\prime \prime} ( 0 ) > 0$, then we have
\begin{equation*}
\lambda_{k} ( P ) =\sqrt{\frac{V^{\prime \prime} ( 0 )}{2}} ( 2 k - 1 ) h + \cO ( h^{3 / 2} ) ,
\end{equation*}
as $h \to 0$ (see \cite{Si82,HeSj84,CyFrKiSi87,DiSj99}). The idea behind this result is that one can replace the potential $V$ by its Taylor expansion near $0$. Following this strategy, the case where $V$ vanishes to higher order is treated in \cite{MarRou88}. The authors assume that the potential admits a non-zero Taylor expansion $V ( x ) = x^{2 p} + \cO ( x^{2 p + 1} )$ near 0 with $p \geq 2$ (see also \cite{Vo83} for $p = 2$). In that case,
\begin{equation*}
\lambda_{k} ( P ) \sim h^{\frac{2 p}{p + 1}} \lambda_{k} ( - \Delta + x^{2 p} ) .
\end{equation*}
All these results have extensions to higher dimensions. The strategy of the proofs of the previous results does not seem to adapt for more general potentials, in particular for those which have no homogeneous leading term near the minimum.

In this article we describe the low-lying eigenvalues of $P$ without the assumption that $P$ has an expansion near 0. In section \ref{s3} we only assume that $V$ is monotonous near $0$ and we give a control from below and from above for $\lambda_{k} ( P )$ by $h^{2} \vert I_{h} \vert^{- 2}$, where $I_{h}$ is the small interval around $0$ defined by the implicit relation $I_{h} = V^{- 1} ( ( 0 , h^{2}  \vert I_{h} \vert^{- 2} ) )$. Roughly speaking, this interval equilibrates the kinetic energy and the potential energy of the operator, in the sense that 
\begin{equation*}
\frac{h^{2}}{\vert I_{h} \vert^{2}} \approx \lambda_{1} ( - h^{2} \Delta^{D}_{I_{h}} ) \approx \sup_{I_{h}} ( V ) ,
\end{equation*}
where $- \Delta^{D}_{I}$ denotes the Dirichlet Laplacian on the interval $I$.

Next, we give the asymptotic of $\lambda_{k} ( P )$ in the extremal case where $V$ is flat at $0$, i.e. $V ( x ) = \cO ( \vert x \vert^{n} )$ near $0$ for all $n \in \N$. More precisely, we obtain in dimension $1$ that
\begin{equation*}
\lambda_{k} ( P ) \sim \pi^{2} k^{2} h^{2} \vert I_{h} \vert^{- 2} ,
\end{equation*}
under some assumptions stated in Section \ref{s4}. Our proof is based on the fact that $V$ can be replaced by $0$ on the interval $I_{h}$ and is large outside. The asymptotic follows from known results on Schr\"odinger operators with large coupling constant. We provide examples such as $V ( x ) = e^{- \vert x \vert^{- \alpha}}$ with an estimate of remainder (see \eqref{a30}). We also give the asymptotic of the eigenfunctions in Proposition \ref{a20}. Finally, we explain in section \ref{s5} how to adapt our method in higher dimensions for potentials of the form $V ( x ) = V_{0} ( \vert x \vert \theta ( x \vert x \vert^{- 1} ) )$, where $V_{0}$ is a flat potential and $\theta$ is a continuous function on $\S^{d - 1}$.

\section{General estimates} \label{s2}

In this section we give information on the behavior of $\lambda_{1} (P) h^{- 2}$ as $h \to 0$. First, this ratio is bounded from below, since
\begin{equation}
\forall h\in (0,1), \qquad \lambda_{1} ( P ) \geq h^{2} \lambda_{1} ( - \Delta + V ) ,
\end{equation}
with $\lambda_{1} ( - \Delta + V ) > 0$ by unique continuation. Next, we give a necessary and sufficient criterion for its boundedness, in link with $V^{- 1} ( 0 )$. Note that, $h \mapsto \lambda_{1} ( P ) h^{- 2}$ being decreasing on $( 0 , + \infty )$, either this function is bounded near $0$, or it goes to $+ \infty$. It is known that this latter case happens when the Lebesgue measure of $V^{- 1} ( 0 )$ is zero, see \cite[Lemma 3.2]{BelHelVer01}. We introduce the following definition.

\begin{definition} \label{a26}
A set $E \subset \R^{d}$ is 1-null if the only function $f \in H^{1} ( \R^{d} )$ such that $\supp ( f ) \subset E$ is the zero function.
\end{definition}

\renewcommand{\labelitemi}{$\cdot$}

This terminology comes from \cite{HewMoi17}, where these sets are studied. In fact, this definition is very closed to the older notion of {\it set of uniqueness for $H^{1}$}, see \cite[Section 11.3]{AdaHed96} (and also \cite[Section 14.4]{Maz11}). These two notions coincide when $E$ is a closed set (which will be the case here), see \cite[Proposition 3.17]{HewMoi17}. We refer to \cite[Section 2]{HewMoi17} (and the references hereby) for a more detailed approach, including a characterization by the capacity of such sets and we just give below a list of examples illustrating this definition:
\begin{itemize}
\item If the interior of $E$ is not empty, $E$ is not 1-null.
\item The converse is true when $d = 1$, because functions in $H^{1} ( \R )$ are continuous.
\item If the Lebesgue measure of $E$ is zero, then $E$ is 1-null. 
\item For all $d \geq 2$, a compact set $E \subset \R^{d}$ of positive Lebesgue measure, which has empty interior and is not 1-null, is constructed explicitly in \cite[Theorem 3]{Pol72}.
\end{itemize}
This notion is related to our spectral problem through the following result.

\begin{proposition}[Behavior of the first eigenvalue]\sl
The set $V^{- 1} ( 0 )$ is $1$-null if and only if
\begin{equation*}
\lim_{h \to 0} \frac{\lambda_{1} ( P )}{h^{2}} = + \infty .
\end{equation*}
\end{proposition}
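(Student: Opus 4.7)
The plan is to prove both implications by contrapositive, exploiting that $h \mapsto \lambda_{1}(P) h^{-2}$ is monotone decreasing (as noted just above), so that ``does not tend to $+\infty$'' is equivalent to ``bounded as $h \to 0$''.

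For the easy direction, I would start from the assumption that $V^{-1}(0)$ is not $1$-null, fix a witness $f \in H^{1}(\R^{d}) \setminus \{0\}$ with $\supp(f) \subset V^{-1}(0)$, and insert $f$ into the Rayleigh quotient for $P$. Since $V = 0$ almost everywhere on $\supp(f)$, the potential term vanishes and the min-max principle yields $\lambda_{1}(P) \leq h^{2} \|\nabla f\|_{2}^{2}/\|f\|_{2}^{2}$, so $\lambda_{1}(P) h^{-2}$ stays bounded. This part is essentially formal.

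For the converse, I would assume $\lambda_{1}(P) \leq C h^{2}$ for a sequence $h_{n} \to 0$ and construct a nonzero $u \in H^{1}(\R^{d})$ with $\supp(u) \subset V^{-1}(0)$ as a strong $L^{2}$-limit of normalized ground states. Let $u_{h}$ satisfy $P u_{h} = \lambda_{1}(P) u_{h}$ and $\|u_{h}\|_{2} = 1$. The energy identity
\[
h^{2} \|\nabla u_{h}\|_{2}^{2} + \int V |u_{h}|^{2} \, dx = \lambda_{1}(P) \leq C h^{2}
\]
simultaneously shows that $(u_{h})$ is bounded in $H^{1}$ and that $\int V |u_{h}|^{2} \, dx \to 0$. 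Using $\liminf_{|x| \to \infty} V > 0$, the second bound yields a uniform tightness estimate $\int_{\{V \geq \delta\}} |u_{h}|^{2} \, dx \leq C h^{2}/\delta$ outside a fixed compact set, so combined with $H^{1}$-boundedness, Rellich--Kondrachov furnishes (along a subsequence) strong convergence $u_{h_{n}} \to u$ in $L^{2}(\R^{d})$ and weak convergence in $H^{1}$. In particular $u \in H^{1}$ and $\|u\|_{2} = 1$. To locate its support I would apply Fatou to the non-negative integrands, obtaining $\int V |u|^{2} \, dx \leq \liminf_{n} \int V |u_{h_{n}}|^{2} \, dx = 0$; hence $u = 0$ almost everywhere on the set $V^{-1}((0,+\infty))$, which is \emph{open} by continuity of $V$, so $\supp(u) \subset V^{-1}(0)$ in the topological sense required by Definition~\ref{a26}, contradicting its $1$-nullity.

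The main obstacle is the compactness step: the ground states $u_{h}$ are not supported in a single compact set, so Rellich does not apply directly on $\R^{d}$. The confining hypothesis is exactly what provides the tightness needed to upgrade local $L^{2}$-compactness to global strong convergence; without it the candidate $u$ could vanish and the argument would collapse. Everything else is bookkeeping, apart from the mild but necessary use of continuity of $V$ to turn an almost-everywhere support statement into a topological one compatible with the $1$-nullity definition.
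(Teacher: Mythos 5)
Your proof is correct and takes essentially the same approach as the paper's: the forward direction by inserting a supported witness $f \in H^1$ into the Rayleigh quotient, and the converse by extracting an $H^1$-weak limit of normalized ground states from the energy identity $h^2\|\nabla u_h\|^2 + \int V|u_h|^2 \leq Ch^2$ and showing this limit is nonzero and supported in $V^{-1}(0)$. The technical execution of the converse differs slightly but not materially: you upgrade to global strong $L^2$ convergence via tightness plus Rellich and then invoke Fatou together with the openness of $\{V>0\}$, whereas the paper works only with $L^{2}_{\mathrm{loc}}$ convergence on the compact sublevel sets $K_\varepsilon = V^{-1}([0,\varepsilon])$, using the mass estimate $\int_{K_\varepsilon}|u_n|^2 \geq 1 - Ch_n^2/\varepsilon$ to get $\|v\|_{L^2}=1$ and $\supp(v)\subset\bigcap_\varepsilon K_\varepsilon = V^{-1}(0)$ directly; both variants exploit the confining hypothesis in the same way.
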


\begin{proof}
Assume that there exist $C > 0$ and a sequence $( h_{n} )_{n \in \N} \searrow 0$ such that
\begin{equation*}
\forall n \in \N , \qquad \lambda_{1} ( P ) \leq C h_{n}^{2} .
\end{equation*}
We will show that $V^{- 1} ( 0 )$ is not $1$-null. Let $u_{n}$ be an eigenfunction associated with $\lambda_{1} ( P )$ such that $\Vert u_{n} \Vert_{L^{2} ( \R^{d} )} = 1$. In particular,
\begin{equation*}
\forall n \in \N , \qquad \Vert \nabla u_{n} \Vert_{L^{2} ( \R^{d} )}^{2} + h_{n}^{- 2} \Vert V^{1 / 2} u_{n} \Vert_{L^{2} ( \R^{d} )}^{2} \leq C ,
\end{equation*}
and therefore $( u_{n} )_{n \in \N}$ is bounded in $H^{1} ( \R^{d} )$. Thus, there exists a subsequence $( v_{n} )_{n \in \N}$ which converges weakly to $v$ in $H^{1} ( \R^{d} )$. In particular, $( v_{n} )_{n \in \N}$ converges also to $v$ in $L^{2}_{\mathrm{l o c}} ( \R^{d} )$.

Let $\varepsilon \in ( 0 , \liminf_{\vert x \vert \to + \infty} V )$ and denote $K_{\varepsilon} : = V^{- 1} ( [ 0 , \varepsilon ] )$. Then 
\begin{equation*}
\varepsilon \int_{x \notin K_{\varepsilon}} \vert u_{n} ( x ) \vert ^{2} d x \leq \int_{x \notin K_{\varepsilon}} V ( x ) \vert u_{n} ( x ) \vert^{2} d x \leq C h_{n}^{2} ,
\end{equation*}
and we obtain 
\begin{equation*}
\int_{x \in K_{\varepsilon}} \vert u_{n} ( x ) \vert^{2} d x  = \Vert u_{n} \Vert_{L^{2} ( \R^{d} )}^{2} - \int_{x \notin K_{\varepsilon}} \vert u_{n} ( x ) \vert ^{2} d x \geq 1 - \frac{C}{\varepsilon} h_{n}^{2} .
\end{equation*}
Since $K_{\varepsilon}$ is compact, we get in the limit $n \to + \infty$
\begin{equation*}
\int_{x \in K_{\varepsilon}} \vert v ( x ) \vert^{2} d x \geq 1 .
\end{equation*}
Moreover, since $( v_{n} )_{n \in \N}$ converges weakly to $v$ in $H^{1} ( \R^{d} )$, it converges also weakly in $L^{2} ( \R^{d} )$ and $\Vert v \Vert_{L^{2} ( \R^{d} )} \leq 1$. Therefore, we deduce that $\Vert v \Vert_{L^{2} ( \R^{d} )} = 1$ and
\begin{equation*}
\supp ( v ) \subset \bigcap_{0 < \varepsilon \in \Q} K_{\varepsilon} = V^{- 1} ( 0 ) .
\end{equation*}
Since $v \in H^{1} ( \R^{d} )$, $V^{- 1} ( 0 )$ does not satisfy Definition \ref{a26}.

Conversely, assume that there exists $v \in H^{1} ( \R^{d} ) \setminus \{ 0 \}$ supported in $V^{- 1} ( 0 )$. Such a $v$ is in the form domain of $P$ and the maximin principle provides 
\begin{equation*}
\lambda_{1} ( P ) \leq \frac{\< P v , v \>}{\Vert v \Vert^{2}_{L^{2} ( \R^{d} )}} = h^{2} \frac{ \Vert \nabla v \Vert^{2}_{L^{2} ( \R^{d} )}}{\Vert v \Vert_{L^{2} ( \R^{d} )}^{2}} .
\end{equation*}
Therefore $\limsup_{h \to 0} \lambda_{1} ( P ) h^{- 2} < + \infty$.
\end{proof}

\section{Punctual wells in dimension $1$}

\subsection{A priori estimates of eigenvalues} \label{s3}

This part is devoted to the study of the low-lying eigenvalues in dimension $1$. We first give a general result for punctual wells. We consider $P$ satisfying \eqref{a25} with $d = 1$ and assume that $V^{- 1} ( 0 ) = \{ 0 \}$ and that $V$ is increasing for small positive $x$ and decreasing for small negative $x$. For $h > 0$ small enough, let $\delta_{\pm} ( h ) \in \pm ] 0 , + \infty [$ be the unique solutions of
\begin{equation} \label{a9}
V ( \delta_{-} ) = V ( \delta_{+} ) = \frac{h^{2}}{( \delta_{+} - \delta_{-} )^{2}} .
\end{equation}
Indeed, solving first $V ( \delta_{-} ) = V ( \delta_{+} )$ leads to $\delta_{-} = \delta_{-} ( \delta_{+} )$ with $\delta_{-} ( \cdot )$ continuous, negative and decreasing. Therefore, the continuous and increasing function $\delta_{+} \mapsto V ( \delta_{+} ) ( \delta_{+} - \delta_{-} ( \delta_{+} ) )^{2}$ meets $h^{2}$ at a unique point. According to the introduction, we have $I_{h} = ( \delta_{-} , \delta_{+} )$. For even potentials, \eqref{a9} becomes $\delta : = \delta_{+} = \vert \delta_{-} \vert$ with
\begin{equation} \label{a34}
4 \delta^{2} V ( \delta ) = h^{2} .
\end{equation}
The small eigenvalues of $P$ verify the following lower and upper bounds.

\begin{theorem}[Estimates in dimension $1$]\sl \label{a35}
Let $P$ be as before. Then, there exist constants $C_{k} , c_{k} > 0$ independent of $P$ such that
\begin{equation} \label{a28}
\frac{c_{k} h^{2}}{( \delta_{+} - \delta_{-} )^{2}} \leq \lambda_{k} ( P ) \leq \frac{C_{k} h^{2}}{( \delta_{+} - \delta_{-} )^{2}} ,
\end{equation}
for all $k \in \N^{*}$ and $h$ small enough.
\end{theorem}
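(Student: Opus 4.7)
The plan is to reduce the statement to a scale-invariant one-dimensional spectral problem by a unitary dilation. Set $\delta := \delta_{+} - \delta_{-} = \vert I_{h} \vert$ and $M := h^{2} / \delta^{2}$, so that $V ( \delta_{\pm} ) = M$ by \eqref{a9}. The change of variable $y = x / \delta$ defines a unitary map on $L^{2} ( \R )$ which conjugates $P$ into $M \tilde P$, where
\begin{equation*}
\tilde P := - \frac{d^{2}}{d y^{2}} + W , \qquad W ( y ) := \frac{1}{M} V ( \delta y ) .
\end{equation*}
Writing $a_{\pm} := \delta_{\pm} / \delta$, one has $a_{+} - a_{-} = 1$, $W ( a_{\pm} ) = 1$, $W ( 0 ) = 0$, and $W$ is decreasing on $( a_{-} , 0 )$ and increasing on $( 0 , a_{+} )$. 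Eigenvalues are preserved up to the factor $M$, so $\lambda_{k} ( P ) = M \mu_{k} ( \tilde P )$, and it suffices to prove universal two-sided bounds $c_{k} \leq \mu_{k} ( \tilde P ) \leq C_{k}$ depending only on $k$.

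For the upper bound, I apply the min-max principle to the orthonormal family $\phi_{j} ( y ) := \sqrt{2} \sin ( j \pi ( y - a_{-} ) )$, supported in $[ a_{-} , a_{+} ]$, for $j = 1 , \dots , k$. Since $W \leq 1$ on $[ a_{-} , a_{+} ]$ by monotonicity, a direct computation gives $\< \tilde P \phi_{j} , \phi_{j} \> \leq j^{2} \pi^{2} + 1$, so $\mu_{k} ( \tilde P ) \leq k^{2} \pi^{2} + 1 =: C_{k}$.

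For the lower bound, I first claim that, for $h$ small enough, the pointwise inequality $W ( y ) \geq \one_{\R \setminus [ a_{-} , a_{+} ]} ( y )$ holds for every $y \in \R$. Choose $\eta > 0$ so that $V$ is monotonous on $( - \eta , \eta ) \setminus \{ 0 \}$; since $\delta_{\pm} \to 0$ as $h \to 0$, one may assume $\delta_{+} < \eta$. For $y \in [ a_{+} , \eta / \delta ]$, monotonicity gives $V ( \delta y ) \geq V ( \delta_{+} ) = M$, hence $W ( y ) \geq 1$. For $y > \eta / \delta$, the positive constant $c_{0} := \inf_{\vert x \vert \geq \eta} V$, positive by continuity together with $V^{-1}(0) = \{ 0 \}$ and $\liminf_{\vert x \vert \to \infty} V > 0$, yields $W ( y ) \geq c_{0} / M \geq 1$ once $M \leq c_{0}$, which holds for $h$ small. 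The argument for $y \leq a_{-}$ is symmetric. The form inequality combined with min-max then gives
\begin{equation*}
\mu_{k} ( \tilde P ) \geq \mu_{k} ( H_{0} ) , \qquad H_{0} := - \frac{d^{2}}{d y^{2}} + \one_{\R \setminus [ a_{-} , a_{+} ]} ,
\end{equation*}
and by translation $\mu_{k} ( H_{0} ) = \mu_{k} ( \tilde H_{0} )$ with $\tilde H_{0} := - d^{2} / d z^{2} + \one_{\R \setminus [ - 1 / 2 , 1 / 2 ]}$, a fixed self-adjoint operator with essential spectrum $[ 1 , + \infty )$. Since $\tilde H_{0} \geq 0$ and an elementary matching of exponentials rules out $0$ as an eigenvalue, the extended sequence $c_{k} := \mu_{k} ( \tilde H_{0} ) > 0$ depends only on $k$, and unrescaling yields \eqref{a28}.

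The main obstacle is the pointwise estimate $W \geq \one_{\R \setminus [ a_{-} , a_{+} ]}$ away from the origin, which forces one to combine the local monotonicity hypothesis with the global positivity of $V$ via the semiclassical vanishing $M \to 0$; once it is secured, both inequalities reduce to min-max applied to a fixed, $h$-independent operator.
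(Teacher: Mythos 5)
Your proof is correct and follows essentially the same strategy as the paper's: rescale by the length $\delta_{+} - \delta_{-}$ to reduce to a fixed operator $-\Delta + W$ with $0 \leq W \leq 1$ on a unit-length well and $W \geq 1$ outside, then squeeze the eigenvalues between those of $-\Delta + \one_{\text{(complement)}}$ and a Dirichlet Laplacian plus $1$. The only differences are cosmetic (you dilate without translating, so the well is $[a_-, a_+]$ rather than $[0,1]$, and you use explicit trial functions for the upper bound) together with the fact that you spell out more carefully than the paper why $W \geq 1$ outside the well also in the region where the local monotonicity of $V$ is not assumed, using $\inf_{|x|\geq \eta} V > 0$ and $M = h^{2}/(\delta_{+}-\delta_{-})^{2} \to 0$.
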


In \eqref{a28}, we can always take $C_{k} = \pi^{2} k^{2} + 1$ but $c_{k}$ does not go to $+ \infty$ with $k$. In particular, we have no lower bound on the spectral gap $\lambda_{2} ( P ) - \lambda_{1} ( P )$. To prove this result, we make a scaling using $\delta_{\pm}$ and then compare the rescaled operator with some constant and simple operators.

\begin{proof}
Let us consider the unitary transformation on $L^{2} ( \R )$ given by
\begin{equation} \label{a22}
( U f ) ( x ) = ( \delta_{+} - \delta_{-} )^{- \frac{1}{2}} f \Big( \frac{x - \delta_{-}}{\delta_{+} - \delta_{-}} \Big) .
\end{equation}
Then, the rescaled operator is defined by
\begin{equation} \label{a8}
Q : = \frac{( \delta_{+} - \delta_{-} )^{2}}{h^{2}} U^{- 1} P U = - \Delta + W_{h} ( x ) ,
\end{equation}
where the potential $W_{h}$ satisfies
\begin{equation} \label{a29}
W_{h} ( x ) = \frac{( \delta_{+} - \delta_{-} )^{2}}{h^{2}} V \big( \delta_{-} + x ( \delta_{+} - \delta_{-} ) \big) .
\end{equation}
Using the monotonicity properties of $V$ and \eqref{a9}, we have
\begin{equation*}
W_{h} ( x ) \leq \frac{( \delta_{+} - \delta_{-} )^{2}}{h^{2}} \max ( V ( \delta_{-} ) , V ( \delta_{+} ) ) = 1 ,
\end{equation*}
for all $x \in [ 0 , 1 ]$. The same way,
\begin{equation*}
W_{h} ( x ) \geq \frac{( \delta_{+} - \delta_{-} )^{2}}{h^{2}} \min ( V ( \delta_{-} ) , V ( \delta_{+} ) ) = 1 ,
\end{equation*}
for all $x \in [ 0 , 1 ]^{C}$. Let $- \Delta_{I}^{D}$ denote the Dirichlet Laplacian on the open interval $I$. The previous estimates on $W_{h}$ imply
\begin{equation} \label{a32}
- \Delta + \one_{[ 0, 1 ]^{C}} \leq Q \leq - \Delta^{D}_{( 0 , 1 )} + 1 ,
\end{equation}
in the sense of form, see \cite[Section XIII.15]{ReSi78}. Then, the maximin principle yields
\begin{equation} \label{a33}
\lambda_{k} \big( - \Delta + \one_{[ 0, 1 ]^{C}}  \big) \leq \lambda_{k} ( Q ) \leq \lambda_{k} ( - \Delta^{D}_{( 0 , 1 )} ) + 1 .
\end{equation}
Eventually, the result follows from \eqref{a8}, \eqref{a33}, $\lambda_{k} ( - \Delta + \one_{[ 0, 1 ]^{C}} ) > 0$ and $\lambda_{k} ( - \Delta^{D}_{( 0 , 1 )} ) = \pi^{2} k^{2}$.
\end{proof}

\begin{example}\rm
Let $V$ be an even potential satisfying the assumptions of Theorem \ref{a35} and $V ( x ) = \vert \ln \vert x \vert \vert^{- \alpha}$ near $0$ with $\alpha > 0$. Then, \eqref{a34} gives $\delta \sim h \vert \ln h \vert^{\alpha / 2} / 2$ and $\lambda_{k} ( P )$ is of order $\vert \ln h \vert^{- \alpha}$.
\end{example}

\begin{example}\rm
Consider $V$ as in Theorem \ref{a35} with $V ( x ) \sim \vert x \vert$ for small negative $x$ and $V ( x ) \sim \vert \ln x \vert^{- 1}$ for small positive $x$. Then, \eqref{a9} implies $\vert \delta_{-} \vert \sim \vert \ln \delta_{+} \vert^{- 1}$, $\delta_{+} = \cO ( h^{\infty} )$ and $\vert \delta_{-} \vert \sim h^{2 / 3}$. As consequence, $\lambda_{k} ( P )$ is of order $h^{2 / 3}$. Such operator appears in \cite[Section 4.1]{DaOuRa15_01}.
\end{example}

\subsection{Asymptotic of eigenvalues for flat potentials} \label{s4}

We now consider $P$ as in \eqref{a25} with $d = 1$ and assume that $V^{- 1} ( 0 ) = \{ 0 \}$, $V$ is flat at $0$ and, for all $n \in \N$, the function
\begin{equation} \label{a27}
x \longmapsto \vert x \vert^{- n} V ( x ) ,
\end{equation}
is increasing for small positive $x$ and decreasing for small negative $x$. For smooth potentials, this hypothesis is equivalent to $x V^{\prime} ( x ) / V ( x ) \to + \infty$ as $x \to 0$. As before, we define $\delta_{\pm} ( h )$ by \eqref{a9}.

%The small eigenvalues of $P$ are given by the following result.

\begin{theorem}[Spectral asymptotic for flat potentials]\sl \label{a10}
Let $P$ satisfy the previous assumptions. In the limit $h \to 0$, we have
\begin{equation*}
\lambda_{k} ( P ) \sim \frac{\pi^{2} h^{2} k^{2}}{( \delta_{+} - \delta_{-} )^{2}} ,
\end{equation*}
for all $k \in \N^{*}$.
\end{theorem}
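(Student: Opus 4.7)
The plan is to reuse the rescaling from the proof of Theorem \ref{a35}. After conjugating $P$ by the unitary $U$ of \eqref{a22}, the statement becomes equivalent to
\begin{equation*}
\lambda_{k} ( Q ) \longrightarrow \pi^{2} k^{2} = \lambda_{k} ( - \Delta^{D}_{( 0 , 1 )} )
\end{equation*}
as $h \to 0$, where $Q = - \Delta + W_{h}$ is the rescaled operator from \eqref{a8}--\eqref{a29}. The crude bounds on $W_{h}$ used in Theorem \ref{a35} are not sharp enough for an asymptotic; the flatness hypothesis has to be exploited to refine them.

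The first step is to derive much sharper pointwise estimates on $W_{h}$. Setting $\gamma = \gamma ( h ) = - \delta_{-} / ( \delta_{+} - \delta_{-} ) \in ( 0 , 1 )$, so that $y = \gamma$ corresponds to $x = 0$, the monotonicity of $\vert x \vert^{- n} V ( x )$ together with \eqref{a9} should give, for every $n \in \N$ and every $h$ small enough (depending on $n$), the sharp bounds
\begin{equation*}
W_{h} ( y ) \leq \Big( \frac{y - \gamma}{1 - \gamma} \Big)^{n} \text{ on } [ \gamma , 1 ] , \qquad W_{h} ( y ) \geq \Big( \frac{y - \gamma}{1 - \gamma} \Big)^{n} \text{ on } [ 1 , + \infty ) ,
\end{equation*}
together with the mirror estimates on the negative side of $\gamma$. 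In particular $W_{h} \to 0$ pointwise on $( 0 , 1 )$ and $W_{h} \to + \infty$ pointwise on $\R \setminus [ 0 , 1 ]$.

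For the upper bound, I would plug the first $k$ Dirichlet eigenfunctions $\phi_{j} ( y ) = \sqrt{2} \sin ( j \pi y )$, extended by zero to $\R$, as $L^{2}$-orthonormal test functions into the maximin principle, which gives
\begin{equation*}
\lambda_{k} ( Q ) \leq \pi^{2} k^{2} + \max_{1 \leq j \leq k} \int_{0}^{1} W_{h} \phi_{j}^{2} \, d y .
\end{equation*}
The pointwise bound of the previous step combined with $\Vert \phi_{j} \Vert_{\infty} = \sqrt{2}$ controls the remainder by a constant times $1 / ( n + 1 )$ for each $n$, which vanishes upon sending $h \to 0$ then $n \to \infty$.

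The matching lower bound is the main obstacle and relies on the large coupling behaviour of $W_{h}$ outside $[ 0 , 1 ]$. The natural route is to invoke a standard spectral convergence theorem: the quadratic form of $Q$ should $\Gamma$-converge as $h \to 0$ to the Dirichlet form on $( 0 , 1 )$. Concretely, an $L^{2}$-normalized eigenfunction $u_{h}$ with $\lambda_{k} ( Q ) = O ( 1 )$ is bounded in $H^{1} ( \R )$, and $\int W_{h} \vert u_{h} \vert^{2} = O ( 1 )$ combined with $W_{h} \to + \infty$ off $[ 0 , 1 ]$ forces any weak $H^{1}$-limit to be supported in $[ 0 , 1 ]$, hence to lie in $H^{1}_{0} ( ( 0 , 1 ) )$ by the continuity of one-dimensional $H^{1}$ functions. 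Running this argument on the first $k$ eigenfunctions and transferring orthonormality to the limit gives $\liminf_{h \to 0} \lambda_{k} ( Q ) \geq \pi^{2} k^{2}$. The delicate technical issue is to control the $L^{2}$-mass at infinity (to avoid loss of compactness) and to preserve the $k$-dimensional eigenspace structure in the limit.
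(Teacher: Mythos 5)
Your rescaling and your exploitation of the flatness hypothesis are the same as the paper's, and the overall strategy is sound, but you diverge from the paper's proof in both halves and leave a genuine hole in the lower bound that you should be aware of.

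For the upper bound, the paper does not use test functions directly: it proves (Lemma~\ref{a3}) that for each fixed $\varepsilon\in(0,1/2)$ one has $W_{h}\le m(h)\to 0$ on $[\varepsilon,1-\varepsilon]$, and then bounds $Q\le -\Delta^{D}_{(\varepsilon,1-\varepsilon)}+m(h)$ as forms, yielding $\lambda_{k}(Q)\le \pi^{2}k^{2}(1-2\varepsilon)^{-2}+m(h)$ and sending $h\to 0$ then $\varepsilon\to 0$. Your test-function computation with $\phi_{j}(y)=\sqrt{2}\sin(j\pi y)$ extended by zero is a valid alternative; note only that the error term is $\sum_{j\le k}\int_{0}^{1}W_{h}\phi_{j}^{2}$ (a factor $k$ rather than a maximum over $j$), which still tends to $0$. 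Also be careful with the claimed pointwise bound $W_{h}(y)\ge\bigl((y-\gamma)/(1-\gamma)\bigr)^{n}$ on all of $[1,+\infty)$: the monotonicity of $x\mapsto|x|^{-n}V(x)$ in hypothesis~\eqref{a27} is only assumed near $0$, so for $y$ large the argument $t=(y-\gamma)(\delta_{+}-\delta_{-})$ eventually leaves the monotonicity region, and in fact $t^{-n}V(t)$ must decrease since $V$ is bounded. The bound is fine on a fixed compact neighbourhood of $[0,1]$ once $h$ is small, which is all you use, but stating it on $[1,\infty)$ is incorrect.

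For the lower bound, the paper compares $Q\ge -\Delta+M(h)\one_{[-\varepsilon,1+\varepsilon]^{C}}$ with $M(h)\to\infty$ and then cites the quantitative large-coupling result of Ashbaugh--Harrell \cite[Theorem~3.6]{AshHar82}, obtaining an explicit error $O(M(h)^{-1/2})$; this is what allows the remainder estimate in Example~\ref{a31}. Your compactness/$\Gamma$-convergence route can be made to work and has the merit of being self-contained, but the step you flag as ``the delicate technical issue'' is precisely where the work is: you must rule out loss of $L^{2}$-mass at infinity. One way to close it: Theorem~\ref{a35} already gives $\lambda_{k}(Q)=O(1)$, and for every $R>1$ the monotonicity of $V$ yields $W_{h}\ge M_{R}(h)\one_{|x|>R}$ with $M_{R}(h)\to\infty$; since $\int W_{h}|u_{h}|^{2}\le\lambda_{k}(Q)=O(1)$, this forces $\int_{|x|>R}|u_{h}|^{2}\to 0$ uniformly, hence strong $L^{2}$ convergence of the first $k$ normalized eigenfunctions along subsequences, transfer of orthonormality to the limit, support of the limits in $[0,1]$ (hence $H^{1}_{0}(0,1)$ by one-dimensional continuity), and then the max--min on the limiting $k$-dimensional space gives $\liminf\lambda_{k}(Q)\ge\pi^{2}k^{2}$. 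Without this argument written out, your lower bound is incomplete; with it, your proof is a correct, more elementary but purely qualitative alternative to the paper's, which trades self-containment for the explicit rate of convergence the authors use in their examples.
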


From \eqref{a9}, we have $h^{2} \lesssim ( \delta_{+} - \delta_{-} )^{2 + n}$ for all $n \in \N$ and therefore
\begin{equation*}
\forall \alpha > 0 , \qquad h^{\alpha} \ll \delta_{+} - \delta_{-} \ll 1 ,
\end{equation*}
as $h \to 0$. As a consequence, $\lambda_{k} ( P )$ goes to $0$ faster that any power of $h$ less than 2:
\begin{equation}
\forall \nu < 2 , \qquad h^{2} \ll \lambda_{k} ( P ) \ll h^{\nu} .
\end{equation}
The proof of this theorem uses the strategy of the one of Theorem \ref{a35}. We remark that, after an appropriate scaling, the operator $P$ looks like $- \Delta + R ( h ) \one_{[ 0 , 1 ]^{C}}$ with $R ( h ) \to + \infty$ as $h \to 0$. We conclude applying results on Schr\"{o}dinger operators with large coupling constant. The proof gives also an estimate of the remainder term in Theorem \ref{a10} (see Lemma \ref{a19}).

\begin{proof}
We begin with a lemma showing a difference between flat potentials and those having a non-zero Taylor expansion.

\begin{lemma}\sl \label{a18}
For all $0 < \varepsilon < 1$, we have
\begin{equation*}
\lim_{\fract{\delta \to 0}{\delta \neq 0}} \frac{V ( \varepsilon \delta )}{V ( \delta )} = 0 .
\end{equation*}
\end{lemma}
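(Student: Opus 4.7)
The plan is to exploit the hypothesis in a very direct way: the monotonicity of $x \mapsto |x|^{-n} V(x)$ for \emph{every} $n \in \N$ is precisely what encodes flatness in a quantitative, comparison-friendly form, and it immediately produces bounds on $V(\varepsilon \delta)/V(\delta)$ that decay faster than any power of $\varepsilon$.

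Fix an arbitrary $n \in \N^{*}$. By hypothesis, there exists $\eta_{n} > 0$ such that $x \mapsto x^{-n} V ( x )$ is increasing on $( 0 , \eta_{n} )$ and $x \mapsto \vert x \vert^{-n} V ( x )$ is decreasing on $( - \eta_{n} , 0 )$. For $0 < \delta < \eta_{n}$ we have $0 < \varepsilon \delta < \delta$, so the monotonicity on $(0,\eta_n)$ yields
\begin{equation*}
( \varepsilon \delta )^{-n} V ( \varepsilon \delta ) \leq \delta^{-n} V ( \delta ) ,
\end{equation*}
which rearranges to $V ( \varepsilon \delta ) \leq \varepsilon^{n} V ( \delta )$. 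For $- \eta_{n} < \delta < 0$, both $\delta$ and $\varepsilon \delta$ are negative with $\vert \varepsilon \delta \vert < \vert \delta \vert$, and the decreasing monotonicity of $\vert x \vert^{-n} V ( x )$ on the negative side gives the same inequality $V ( \varepsilon \delta ) \leq \varepsilon^{n} V ( \delta )$. The assumption $V^{-1}(0) = \{ 0 \}$ ensures $V ( \delta ) > 0$, so we may divide to obtain
\begin{equation*}
\frac{V ( \varepsilon \delta )}{V ( \delta )} \leq \varepsilon^{n} ,
\end{equation*}
uniformly for $0 < \vert \delta \vert < \eta_{n}$.

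Passing to the limit $\delta \to 0$ gives $\limsup_{\delta \to 0} V ( \varepsilon \delta ) / V ( \delta ) \leq \varepsilon^{n}$. Since $n$ was arbitrary and $0 < \varepsilon < 1$, letting $n \to + \infty$ yields the claimed conclusion. There is essentially no hard step here; the only mild subtlety is that the threshold $\eta_{n}$ depends on $n$, so one must genuinely take the limit in $\delta$ before the limit in $n$ (not simultaneously). The role of the hypothesis is precisely to upgrade a single polynomial comparison into a whole scale of comparisons, which is the qualitative content of flatness.
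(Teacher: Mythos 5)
Your proof is correct and follows essentially the same approach as the paper's: for each $n$, use the monotonicity of $|x|^{-n}V(x)$ to obtain $V(\varepsilon\delta)/V(\delta)\leq\varepsilon^{n}$ near $0$, then let $n\to+\infty$. The paper only writes out the case $\delta>0$ and remarks the other side is similar; your version handles both signs explicitly and makes the (correct) point about the order of the two limits.
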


\begin{proof}[Proof of Lemma \ref{a18}]
We only consider the case $\delta > 0$, since the negative $\delta$ can be treated similarly. For all $n \in \N$, there exists $\delta_{n} > 0$ such that
\begin{equation*}
\forall \delta \in ( 0 , \delta_{n} ) , \qquad ( \varepsilon \delta )^{- n} V ( \varepsilon \delta ) \leq \delta^{- n} V ( \delta ) ,
\end{equation*}
from \eqref{a27}. Thus, for all $0 < \delta < \delta_{n}$,
\begin{equation} \label{a1}
0 \leq\frac{V ( \varepsilon \delta )}{V ( \delta )} \leq \varepsilon^{n} .
\end{equation}
Since $n$ can be chosen arbitrarily large, this implies the lemma.
\end{proof}

We now apply the unitary transformation \eqref{a22}. The rescaled operator $Q = - \Delta + W_{h}$ is defined in \eqref{a8}--\eqref{a29}. Roughly speaking, $W_{h}$ is very small in $[ 0 , 1 ]$ and very large outside this interval. More precisely,

\begin{lemma}\sl \label{a3}
For all $0 < \varepsilon < 1 / 2$, there exist two functions $m ( h ) , M ( h )$ with $m ( h ) \to 0$ and $M ( h ) \to + \infty$ as $h$ goes to $0$ such that
\begin{equation*}
\begin{aligned}
&0 \leq W_{h} ( x ) \leq m ( h ) &&\text{ for all } x \in [ \varepsilon , 1 - \varepsilon] ,  \\
&W_{h} ( x ) \geq M ( h ) &&\text{ for all } x \in [ - \varepsilon , 1 + \varepsilon]^{C} .
\end{aligned}
\end{equation*}
\end{lemma}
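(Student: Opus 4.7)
The plan is to transfer the flatness hypothesis \eqref{a27} into a family of pointwise estimates for $V$ near $0$, and then evaluate them at the endpoints of the $y$-interval corresponding to $[\varepsilon, 1-\varepsilon]$ or to $[-\varepsilon, 1+\varepsilon]^C$ under the scaling $y = \delta_- + x (\delta_+ - \delta_-)$. First, for each fixed $n \in \N$, assumption \eqref{a27} furnishes $\eta_n > 0$ such that $x \mapsto |x|^{-n} V(x)$ is monotone on $(-\eta_n, 0)$ and on $(0, \eta_n)$. Since $\delta_\pm(h) \to 0$, for $h$ small enough this monotonicity yields the two-sided comparison
\begin{equation*}
V(y) \leq (y / \delta_\pm)^n V(\delta_\pm) \quad \text{for } y \text{ strictly between } 0 \text{ and } \delta_\pm,
\end{equation*}
together with the reversed inequality when $y$ lies outside $[\delta_-, \delta_+]$ but still in $(-\eta_n, \eta_n)$.

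Next I would treat the upper bound on $[\varepsilon, 1-\varepsilon]$. Setting $y_1 := (1-\varepsilon)\delta_- + \varepsilon \delta_+$ and $y_2 := \varepsilon \delta_- + (1-\varepsilon)\delta_+$, the monotonicity of $V$ on each side of $0$ implies $V(y) \leq \max(V(y_1), V(y_2))$ for every $y \in [y_1, y_2]$. Since $\delta_- < 0 < \delta_+$, elementary inequalities give $|y_1|/|\delta_-| \leq 1-\varepsilon$ whenever $y_1 \leq 0$ and $y_2/\delta_+ \leq 1-\varepsilon$ whenever $y_2 \geq 0$. Plugging this into the pointwise estimate above and replacing $V(\delta_\pm)$ by $h^2/(\delta_+ - \delta_-)^2$ via \eqref{a9}, I would obtain
\begin{equation*}
W_h(x) \leq (1-\varepsilon)^n \qquad \text{for every } x \in [\varepsilon, 1-\varepsilon].
\end{equation*}

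For the lower bound on $[-\varepsilon, 1+\varepsilon]^C$ I would argue symmetrically. If $x \geq 1+\varepsilon$, then $y \geq (1+\varepsilon) \delta_+$, and the reverse monotonicity yields $V(y) \geq (1+\varepsilon)^n V(\delta_+)$ as long as $y \leq \eta_n$. For $y \geq \eta_n$, the continuity of $V$ combined with $V^{-1}(0) = \{0\}$ and $\liminf_{|x| \to +\infty} V > 0$ supplies a constant $c_n > 0$ with $V(y) \geq c_n$, so $W_h(x) \geq (\delta_+ - \delta_-)^2 c_n / h^2$, which exceeds $(1+\varepsilon)^n$ once $h$ is small enough, since $(\delta_+ - \delta_-)^2 / h^2 = 1/V(\delta_+) \to +\infty$. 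The case $x \leq -\varepsilon$ is identical, giving $W_h(x) \geq (1+\varepsilon)^n$ throughout $[-\varepsilon, 1+\varepsilon]^C$ for $h$ sufficiently small.

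Finally, having established that for each $n \in \N$ there exists $h_n > 0$ such that both bounds hold with constants $(1-\varepsilon)^n$ and $(1+\varepsilon)^n$ for all $h \in (0, h_n)$, a standard diagonal extraction produces an integer-valued function $n(h) \to +\infty$ as $h \to 0$, for which $m(h) := (1-\varepsilon)^{n(h)}$ and $M(h) := (1+\varepsilon)^{n(h)}$ fulfil the conclusion. The only delicate point I anticipate is the case analysis on the signs of $y_1$ and $y_2$ in the second step, but it becomes purely combinatorial once the pointwise estimate from the first step is in hand.
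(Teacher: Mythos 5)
Your proof is correct and follows essentially the same route as the paper: reduce to comparing $V$ at the rescaled endpoints against $V(\delta_\pm)$, use the monotonicity of $V$ near $0$ to propagate the endpoint bounds across the relevant regions, and then use the monotonicity of $|x|^{-n}V(x)$ from \eqref{a27} to make the resulting ratios go to $0$ or $+\infty$. The paper packages this last step as a separate limit (Lemma \ref{a18}) and invokes it directly, while you inline the argument and manufacture $m(h),M(h)$ via a diagonal extraction in $n$; you also handle the region $|y|\ge \eta_n$, where \eqref{a27} no longer applies, slightly more explicitly, but these are presentational differences rather than a different method.
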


\begin{proof}[Proof of Lemma \ref{a3}]
From the monotonicity properties of $W_{h}$ near $[0 , 1]$, it is enough to verify the lemma at $\pm \varepsilon$ and $1 \pm \varepsilon$. Using \eqref{a9} and $\pm \delta_{\pm} > 0$, we have
\begin{equation*}
W_{h} ( 1 - \varepsilon ) = \frac{( \delta_{+} - \delta_{-} )^{2}}{h^{2}} V \big( ( 1 - \varepsilon ) \delta_{+} + \varepsilon \delta_{-} \big) \leq \max \bigg( \frac{V \big( ( 1 - \varepsilon ) \delta_{+} \big)}{V ( \delta_{+} )} , \frac{V ( \varepsilon \delta_{-} )}{V ( \delta_{-} )} \bigg) = : m_{+} ( h ) .
\end{equation*}
Similar computations show that $W_{h} ( \varepsilon ) \leq m_{-} ( h )$ where 
\begin{equation*}
m_{-} ( h ) = \max \bigg( \frac{V \big( ( 1 - \varepsilon ) \delta_{-} \big)}{V ( \delta_{-} )} , \frac{V ( \varepsilon \delta_{+} )}{V ( \delta_{+} )} \bigg) .
\end{equation*}
Setting $m ( h ) = \max(m_{\pm} ( h ) )$, the first part of the lemma follows from Lemma \ref{a18}.

More easily, \eqref{a9}, $\delta_{-} < 0$ and the monotonicity properties of $V$ yield
\begin{equation*}
W_{h} ( 1 + \varepsilon ) = \frac{( \delta_{+} - \delta_{-} )^{2}}{h^{2}} V \big( ( 1 + \varepsilon ) \delta_{+} - \varepsilon \delta_{-} \big) \geq \frac{V \big( ( 1 + \varepsilon ) \delta_{+} \big)}{V(\delta_{+})} :=M_{+}( h ) ,
\end{equation*}
and similarly 
\begin{equation*}
W_{h} ( - \varepsilon ) \geq \frac{V \big( ( 1 + \varepsilon ) \delta_{-} \big)}{V ( \delta_{-} )} = : M_{-} ( h ) .
\end{equation*}
We set $M(h)=\min(M_{\pm}(h))$ and we deduce the second part of the lemma from Lemma \ref{a18}.
\end{proof}

We apply the estimates on the potential $W_{h}$ to surround the eigenvalues of $Q$ as in \eqref{a33}.

\begin{lemma}\sl \label{a19}
For all $k \in \N^{*}$, there exists $C > 0$ such that, for all $0 < \varepsilon < 1 / 2$, there exists $h_{\varepsilon} > 0$ with
\begin{equation} \label{a24}
\forall h \in ( 0 , h_{\varepsilon} ) , \qquad \frac{\pi^{2} k^{2}}{( 1 + 2 \varepsilon )^{2}} - \frac{C}{M ( h )^{1 / 2}} \leq \lambda_{k} ( Q ) \leq \frac{\pi^{2} k^{2}}{( 1 - 2\varepsilon )^{2}} + m ( h ) .
\end{equation}
\end{lemma}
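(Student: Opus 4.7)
The plan is to prove the upper and lower bounds separately, using in each case one of the two estimates on $W_h$ furnished by Lemma \ref{a3}, together with min-max arguments and explicit spectral information on simple comparison operators.

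For the upper bound I would use the min-max principle with test functions supported in $[\varepsilon,1-\varepsilon]$. Precisely, let $\psi_1,\ldots,\psi_k$ be the first $k$ normalized Dirichlet eigenfunctions of $-\Delta^{D}_{(\varepsilon, 1-\varepsilon)}$, extended by $0$ outside this interval; they form an orthonormal family in $H^{1}(\R)$ supported in $[\varepsilon, 1-\varepsilon]$. On that interval Lemma \ref{a3} gives $W_{h}\le m(h)$, so for $1\le j\le k$,
\begin{equation*}
\langle Q\psi_{j},\psi_{j}\rangle = \|\psi_{j}'\|_{L^{2}}^{2}+\int W_{h}|\psi_{j}|^{2}\, dx \le \frac{\pi^{2}j^{2}}{(1-2\varepsilon)^{2}}+m(h),
\end{equation*}
and the min-max inequality yields the announced upper bound.

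For the lower bound I would first use $W_{h}\ge M(h)\one_{J^{C}}$ on $\R$, with $J:=(-\varepsilon, 1+\varepsilon)$ (again by Lemma \ref{a3}), to deduce the form inequality $Q\ge Q_{M}:=-\Delta+M(h)\one_{J^{C}}$ and hence $\lambda_{k}(Q)\ge \lambda_{k}(Q_{M})$ by min-max. It then remains to prove the large coupling asymptotic
\begin{equation*}
\lambda_{k}(Q_{M}) \ge \frac{\pi^{2}k^{2}}{(1+2\varepsilon)^{2}}-\frac{C}{M^{1/2}},
\end{equation*}
with $C$ depending only on $k$. For this I would analyze $Q_{M}$ explicitly: any eigenfunction with eigenvalue $\lambda<M$ is sinusoidal on $J$ and exponentially decaying on $J^{C}$ with rate $\sqrt{M-\lambda}$, and $C^{1}$ matching at the endpoints $-\varepsilon$ and $1+\varepsilon$ reduces the eigenvalue problem to the transcendental equation $2\mu\cos\omega+(\mu^{2}-1)\sin\omega=0$, where $\omega=\sqrt{\lambda}(1+2\varepsilon)$ and $\mu=\sqrt{(M-\lambda)/\lambda}$.

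As $M\to+\infty$ one has $\mu\to+\infty$ (since the first $k$ eigenvalues remain bounded by the Dirichlet eigenvalues on $J$), so the right-hand side $-2\mu/(\mu^{2}-1)$ tends to $0^{-}$. A standard implicit-function/Rouché argument around each root $\omega=n\pi$ shows that the $n$-th positive solution satisfies $\omega_{n}=n\pi+O(\mu^{-1})=n\pi+O(M^{-1/2})$, whence $\lambda_{n}(Q_{M})=\pi^{2}n^{2}/(1+2\varepsilon)^{2}-O(M^{-1/2})$ with constant uniform in $\varepsilon\in(0,1/2)$. Combining the two bounds concludes the proof. The main technical point is to make the $O(M^{-1/2})$ correction quantitative and uniform in $\varepsilon$; since for $\varepsilon\in(0,1/2)$ the factor $1/(1+2\varepsilon)^{2}$ is bounded and the implicit-function estimate near each simple root of $\tan$ is robust, the constant $C$ depends only on $k$, as required.
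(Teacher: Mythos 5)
Your proposal is correct and follows the same overall sandwiching strategy as the paper. The upper bound is essentially the paper's argument unfolded: testing the form of $Q$ on the Dirichlet eigenfunctions of $-\Delta^D_{(\varepsilon,1-\varepsilon)}$ extended by zero is exactly the maximin consequence of the form inequality $Q\le -\Delta^D_{(\varepsilon,1-\varepsilon)}+m(h)$ used in the paper. The lower bound also starts identically, via $Q\ge -\Delta + M(h)\one_{J^C}$ and maximin. The real divergence is in how you obtain the large-coupling asymptotic of $-\Delta + M\one_{J^C}$: the paper translates and scales to the normalized operator $-\Delta + \tilde M\one_{[0,1]^C}$ and quotes Theorem 3.6 of Ashbaugh--Harrell for the $\cO(\tilde M^{-1/2})$ convergence to the Dirichlet eigenvalues, whereas you rederive this rate from scratch by solving the 1D matching problem (sinusoidal inside $J$, decaying exponential outside, $C^1$ transmission giving the transcendental equation $\tan\omega = -2\mu/(\mu^2-1)$ with $\mu=\kappa/k_0$) and localizing roots near $n\pi$. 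Your calculation is sound: the a priori bound $\lambda_n\le\lambda_n(-\Delta^D_J)\le\pi^2k^2$ ensures $\mu\gtrsim \sqrt{M}/k$ uniformly over $\varepsilon\in(0,1/2)$, so $|\omega_n-n\pi|\lesssim k/\sqrt M$ and $|\lambda_n-\pi^2n^2/L^2|\lesssim k^2/\sqrt M$ with $L=1+2\varepsilon$, giving the uniformity in $\varepsilon$ you claim. The trade-off: your explicit route is self-contained and elementary, but is intrinsically one-dimensional, whereas the reference-based route in the paper is shorter and its underlying machinery (large-coupling convergence to Dirichlet Laplacians) is what the paper reuses in Section~\ref{s5} for $d\ge2$ (there without rate, via \cite{Si78_01,GeGuHoKlSaSiVo88_01,De91_01}).
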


\begin{proof}
From Lemma \ref{a3}, we have
\begin{equation} \label{a21}
- \Delta + M ( h ) \one_{[ - \varepsilon , 1 + \varepsilon]^{C}} \leq Q \leq - \Delta^{D}_{( \varepsilon , 1 - \varepsilon )} + m ( h ) ,
\end{equation}
in the sense of form. Then, the maximin principle gives
\begin{equation} \label{a4}
\lambda_{k} \big( - \Delta + M ( h ) \one_{[ - \varepsilon , 1 + \varepsilon]^{C}} \big) \leq \lambda_{k} ( Q ) \leq \lambda_{k} ( - \Delta^{D}_{( \varepsilon , 1 - \varepsilon )} ) + m ( h ) .
\end{equation}
On the one hand, applying the translation $x \mapsto x + \varepsilon$ and the scaling $x \mapsto ( 1 + 2 \varepsilon )^{- 1} x$, we see that the operator $- \Delta + M ( h ) \one_{[ - \varepsilon , 1 + \varepsilon]^{C}}$ is unitarily equivalent to $( 1 + 2 \varepsilon )^{- 2} ( - \Delta + ( 1 + 2 \varepsilon )^{2} M ( h ) \one_{[ 0 , 1 ]^{C}} )$. As $h \to 0$, this operator enters the theory of Schr\"{o}dinger operators with large coupling constant, see \cite{AshHar82,BrCa02_01} and also \cite[Problem 25]{Flu99} for explicit computations. In particular, for fix $k \in \N^{*}$, \cite[Theorem 3.6]{AshHar82} provides
\begin{equation*}
\lambda_{k} \big( - \Delta + ( 1 + 2 \varepsilon )^{2} M ( h ) \one_{[ 0 , 1]^{C}} \big) = \lambda_{k} ( - \Delta^{D}_{( 0 , 1 )} ) + \cO \bigg( \frac{1}{( 1 + 2\varepsilon ) M ( h )^{1 / 2}} \bigg) ,
\end{equation*}
as $h \to 0$, and therefore 
\begin{equation*}
\lambda_{k} \big( - \Delta + M ( h ) \one_{[ - \varepsilon , 1 + \varepsilon]^{C}} \big) \geq \frac{\pi^{2} k^{2}}{( 1 + 2 \varepsilon )^{2}} - \frac{C}{M ( h )^{1 / 2}} .
\end{equation*}
On the other hand, we have $\lambda_{k} ( - \Delta^{D}_{( \varepsilon , 1 - \varepsilon )}) = \pi^{2} k^{2} ( 1 - 2 \varepsilon )^{-2}$ and we get the lemma from \eqref{a4}.
\end{proof}
Finally, Theorem \ref{a10} is a direct consequence of \eqref{a8}, Lemma \ref{a3} and Lemma \ref{a19}.
\end{proof}

We now treat some examples of potentials. For simplicity, we will consider even potentials, so that $\delta_{+} = \vert \delta_{-} \vert = : \delta$ and the functions $m ( h )$ and $M ( h )$ of Lemma \ref{a3} are given by 
\begin{equation*}
m ( h ) = \frac{V \big( ( 1 - \varepsilon ) \delta \big)}{V ( \delta )} \qquad \text{and} \qquad M ( h ) = \frac{V \big( ( 1 + \varepsilon ) \delta \big)}{V ( \delta )} ,
\end{equation*}
for $0 < \varepsilon < 1 / 2$. Note that all these examples are of the form $V ( x ) = x^{\omega ( x )}$ with $\omega ( x ) \to + \infty$ as $x \to 0$, which is the generic form of flat potential.

\begin{example}\rm \label{a31}
$V ( x ) = e^{- \vert x \vert^{- \alpha}}$ with $\alpha > 0$. This potential satisfies the assumptions of Theorem \ref{a10} and $\delta$ is given by solving $4 \delta^{2} e^{- \delta^{- \alpha}} = h^{2}$. Taking the logarithm, we get $\delta ( h ) \sim ( 2 \vert \ln h \vert )^{- 1 / \alpha}$. We then optimize the upper bound in \eqref{a24} by solving $\varepsilon = m ( h )$, that is $\varepsilon = e^{- \delta^{- \alpha} ( ( 1 - \varepsilon )^{- \alpha} - 1 ) }$. Quick computations show that the solution $\varepsilon_{+}$ verifies $\varepsilon_{+} ( \delta ) \sim \delta^{\alpha} \vert \ln \delta \vert$ as $h \to 0$. Similarly, optimizing the lower bound in \eqref{a24} leads to $\varepsilon = \varepsilon_{-} ( \delta ) \sim 2 \delta^{\alpha} \vert \ln \delta \vert$. Eventually, Lemma \ref{a19} provides as $h \to 0$
\begin{equation} \label{a30}
\lambda_{k} ( P ) = h^{2} \vert \ln h \vert^{2 / \alpha} \bigg( 2^{- 2 + 2 / \alpha} \pi^{2} k^{2} + \cO \bigg( \frac{\vert \ln \vert \ln h \vert \vert}{\vert \ln h \vert} \bigg) \bigg) .
\end{equation}
\end{example}

\begin{example}\rm
$V ( x ) = e^{- \vert \ln \vert x \vert \vert^{2}}$. The hypotheses of Theorem \ref{a10} hold and $\delta = e^{1 - \sqrt{1 + 2 \vert \ln h / 2 \vert}}$ in that case. Optimization of the remainders is possible as in Example \ref{a31}.
\end{example}

\begin{example}\rm
$V ( x ) = e^{- \vert x \vert^{- 4} - \vert x \vert^{- 2} ( 1 + \sin ( \vert x \vert^{- 36} ) )}$. This potential is continuous, non-negative, flat at $0$ with $V^{- 1} ( 0 ) = \{ 0 \}$ and $\liminf_{\vert x \vert \to + \infty} V ( x ) > 0$. Nevertheless, \eqref{a27} does not hold since $V$ itself is not increasing for small positive $x$. Thus, Theorem \ref{a10} can not be applied here.
\end{example}

The proof of Theorem \ref{a10} gives as a byproduct the description of the eigenvectors of $P$.

\begin{proposition}[Asymptotic of eigenvectors]\sl \label{a20}
Under the assumptions of Theorem \ref{a10}, there exists a normalized eigenvector $u_{k}$ of $P$ associated to the eigenvalue $\lambda_{k} ( P )$ with $k \in \N^{*}$ such that
\begin{equation} \label{a36}
u_{k} ( x ) = \sqrt{\frac{2}{\delta_{+} - \delta_{-}}} \sin \Big( \pi k \frac{x - \delta_{-}}{\delta_{+} - \delta_{-}} \Big) \one_{[ \delta_{-} , \delta_{+} ]} + o_{L^{2} ( \R )} ( 1 ) ,
\end{equation}
in the limit $h \to 0$.
\end{proposition}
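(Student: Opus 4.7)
The plan is to transport everything through the rescaling $U$ from \eqref{a22}: if $v_{k} := U^{-1} u_{k}$ is a normalized eigenvector of the rescaled operator $Q = -\Delta + W_{h}$ associated with the eigenvalue $\lambda_{k} ( Q ) = ( \delta_{+} - \delta_{-} )^{2} h^{-2} \lambda_{k} ( P )$, then by unitarity of $U$ the claim \eqref{a36} reduces to showing
\begin{equation*}
v_{k} \longrightarrow \sqrt{2} \sin ( \pi k \cdot ) \one_{[ 0 , 1 ]} \quad \text{in } L^{2} ( \R ) .
\end{equation*}
The strategy is compactness, identification of the weak limit as a Dirichlet eigenfunction on $(0,1)$, and uniqueness up to sign.

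First, I would check that $( v_{k} )$ is bounded in $H^{1} ( \R )$. Since $W_{h} \geq 0$ and $\Vert v_{k} \Vert_{L^{2}} = 1$, the identity $\< Q v_{k} , v_{k} \> = \lambda_{k} ( Q )$ gives both $\Vert \nabla v_{k} \Vert_{L^{2}}^{2} \leq \lambda_{k} ( Q )$ and $\< W_{h} v_{k} , v_{k} \> \leq \lambda_{k} ( Q )$, which are bounded by Theorem \ref{a10}. Extract a subsequence converging weakly in $H^{1} ( \R )$ and strongly in $L^{2}_{\mathrm{l o c}} ( \R )$ to some $v$. To localize the support, fix $\varepsilon \in ( 0 , 1 / 2 )$; by Lemma \ref{a3},
\begin{equation*}
M ( h ) \int_{[ - \varepsilon , 1 + \varepsilon ]^{C}} \vert v_{k} \vert^{2} \leq \< W_{h} v_{k} , v_{k} \> \leq C ,
\end{equation*}
so $\int_{[ - \varepsilon , 1 + \varepsilon ]^{C}} \vert v_{k} \vert^{2} \to 0$. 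Combined with the strong $L^{2}_{\mathrm{l o c}}$ convergence, this upgrades to strong $L^{2} ( \R )$ convergence, yielding $\Vert v \Vert_{L^{2}} = 1$ and, after a diagonal argument in $\varepsilon$, $\supp ( v ) \subset [ 0 , 1 ]$. Continuity of $H^{1}$ functions in dimension $1$ then places $v$ in $H^{1}_{0} ( ( 0 , 1 ) )$.

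Next, I would identify $v$ as a Dirichlet eigenfunction. For any $\varphi \in C_{c}^{\infty} ( ( 0 , 1 ) )$, choose $\varepsilon$ small enough that $\supp ( \varphi ) \subset [ \varepsilon , 1 - \varepsilon ]$; then $W_{h}$ is bounded by $m ( h ) \to 0$ on $\supp ( \varphi )$ by Lemma \ref{a3}, and passing to the limit in
\begin{equation*}
\int \nabla v_{k} \cdot \overline{\nabla \varphi} + \int W_{h} v_{k} \overline{\varphi} = \lambda_{k} ( Q ) \int v_{k} \overline{\varphi} ,
\end{equation*}
together with $\lambda_{k} ( Q ) \to \pi^{2} k^{2}$ from Theorem \ref{a10}, yields $\int_{0}^{1} v^{\prime} \overline{\varphi^{\prime}} = \pi^{2} k^{2} \int_{0}^{1} v \overline{\varphi}$. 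Hence $v$ is a normalized Dirichlet eigenfunction of $- \Delta^{D}_{( 0 , 1 )}$ with eigenvalue $\pi^{2} k^{2}$. Since this eigenvalue is simple, $v = \pm \sqrt{2} \sin ( \pi k \cdot ) \one_{[ 0 , 1 ]}$; choosing the sign of $u_{k}$ accordingly and applying the standard subsequence principle (every subsequence admits a further subsequence converging to the same limit), the full sequence $( v_{k} )$ converges, and unwinding the scaling gives \eqref{a36}.

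The main obstacle is the upgrade from weak $H^{1}$ to strong $L^{2} ( \R )$ convergence. Pure Sobolev compactness yields only $L^{2}_{\mathrm{l o c}}$ convergence, and mass could \emph{a priori} escape to infinity; the large-coupling estimate $W_{h} \geq M ( h ) \to + \infty$ outside $[ - \varepsilon , 1 + \varepsilon ]$ from Lemma \ref{a3} is precisely the ingredient that prevents this and forces the limit to carry the whole $L^{2}$ mass on $[ 0 , 1 ]$.
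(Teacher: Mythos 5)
Your proof is correct, and it takes a genuinely different route from the paper's. The paper argues by induction on $k$: it takes the explicit Dirichlet eigenfunction $V_k^{\varepsilon}$ on $(\varepsilon,1-\varepsilon)$, splits it with spectral projectors of $Q$ into low/middle/high pieces, kills the low piece by the inductive hypothesis and orthogonality, kills the high piece by a quadratic-form estimate coming from \eqref{a21}, and concludes that $V_k$ is close to $\one_{\{\lambda_k(Q)\}}(Q)V_k$. You instead run a compactness argument directly on the rescaled eigenvector $v_k = U^{-1}u_k$: bound it in $H^1$, extract a weak limit $v$, use the large-coupling bound $W_h \geq M(h)$ on $[-\varepsilon,1+\varepsilon]^C$ to prevent mass escape and upgrade $L^2_{\mathrm{loc}}$ to $L^2(\R)$ convergence with $\supp(v)\subset[0,1]$ and $\|v\|=1$, then identify $v$ as a Dirichlet eigenfunction of $-\Delta^D_{(0,1)}$ by passing to the limit in the weak formulation and invoking $\lambda_k(Q)\to\pi^2k^2$. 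Simplicity of Dirichlet eigenvalues on an interval plus a consistent sign normalization then forces the whole family to converge via the subsequence principle. Your route avoids induction entirely and is the more standard ``compactness + identification of the limit'' PDE argument; the paper's spectral-projection route is more self-contained (no extraction of subsequences) and, as a side benefit, exposes the quantitative $o_{\varepsilon\to0}(1)+o^{\varepsilon}_{h\to0}(1)$ error structure. One point worth making explicit in your write-up: the passage from $L^2_{\mathrm{loc}}$ to $L^2(\R)$ convergence needs both that $\int_{[-\varepsilon,1+\varepsilon]^C}|v_k|^2\to0$ (your large-coupling estimate) \emph{and} that $\supp(v)\subset[-\varepsilon,1+\varepsilon]$, the latter obtained from weak lower semicontinuity of the $L^2$ norm on $[-\varepsilon,1+\varepsilon]^C$; you state the conclusion correctly but compress this two-step reasoning into ``combined with the strong $L^2_{\mathrm{loc}}$ convergence.''
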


The functions in the right hand side of the last equation are normalized in $L^{2} ( \R )$, belong to the form domain of $P$ but not in its domain. In fact, they form a basis of eigenvectors of $- h^{2} \Delta_{( \delta_{-} , \delta_{+} )}^{D}$. This result is then in agreement with the intuition that $P$ behaves like this operator at low energy.

\begin{proof}
Let $v_{k}$ be the function in the right hand side of \eqref{a36}. We show this proposition by induction over $k$. For $k = 0$, there is nothing to prove. Assume that this property holds true until $k - 1 \in \N$. Using the unitary transform \eqref{a22}, we note $U_{\ell} : = U^{- 1} u_{\ell}$ for $1 \leq \ell \leq k - 1$,
\begin{equation*}
V_{\ell} : = U^{- 1} v_{\ell} = \sqrt{2} \sin ( \pi \ell x ) \one_{[ 0 , 1 ]} ( x ) \qquad \text{and} \qquad V_{k}^{\varepsilon} : = \sqrt{\frac{2}{1 - 2 \varepsilon}} \sin \Big( \frac{\pi k ( x - \varepsilon )}{1 - 2 \varepsilon} \Big) \one_{[ \varepsilon , 1 - \varepsilon ]} ( x ) ,
\end{equation*}
for $1 \leq \ell \leq k$. In particular, $( U_{\ell} )_{1 \leq \ell \leq k - 1}$ (resp. $( V_{\ell} )_{1 \leq \ell \leq k}$) is an orthonormal basis of the eigenspace associated to the $k - 1$ (resp. $k$) first eigenvalues of $Q$ (resp. $- \Delta^{D}_{(0 , 1 )}$). Otherwise, $V_{k}^{\varepsilon}$ is a normalized eigenvector of $- \Delta_{( \varepsilon , 1 - \varepsilon )}^{D}$ associated to the eigenvalue $\pi^{2} k^{2} ( 1 - 2 \varepsilon )^{- 2}$ and $V_{k}^{\varepsilon} = V_{k} + o_{\varepsilon \to 0} ( 1 )$. Let us decompose $V_{k}^{\varepsilon}$ using
\begin{equation*}
V_{k}^{-} = \one_{[ 0 , \lambda_{k - 1} ( Q ) ]} ( Q ) V_{k}^{\varepsilon} , \qquad V_{k}^{0} = \one_{\{ \lambda_{k} ( Q ) \}} ( Q ) V_{k}^{\varepsilon} , \qquad V_{k}^{+} = \one_{[ \lambda_{k + 1} ( Q ) , + \infty )} ( Q ) V_{k}^{\varepsilon} .
\end{equation*}
Of course, $V_{k}^{\varepsilon} = V_{k}^{-} + V_{k}^{0} + V_{k}^{+}$ and $1 = \Vert V_{k}^{-} \Vert^{2} + \Vert V_{k}^{0} \Vert^{2} + \Vert V_{k}^{+} \Vert^{2}$. Moreover, by the induction hypothesis and the orthogonality of the $v_{k}$,
\begin{equation} \label{a23}
V_{k}^{-} = o_{\varepsilon \to 0} ( 1 ) + o_{h \to 0} ( 1 ) .
\end{equation}
Using that $V_{k}^{\varepsilon}$ is in the form domain of $- \Delta_{( \varepsilon , 1 - \varepsilon )}^{D}$, \eqref{a21} gives
\begin{equation*}
\big\< Q V_{k}^{\varepsilon} , V_{k}^{\varepsilon} \big\> \leq \big\< - \Delta_{( \varepsilon , 1 - \varepsilon )}^{D} V_{k}^{\varepsilon} , V_{k}^{\varepsilon} \big\> + \big\< m ( h ) V_{k}^{\varepsilon} , V_{k}^{\varepsilon} \big\> = \pi^{2} k^{2} ( 1 - 2 \varepsilon)^{- 2} + o_{h \to 0}^{\varepsilon} ( 1 ) ,
\end{equation*}
where $o_{h \to 0}^{\varepsilon} ( 1 )$ denotes a function which goes to $0$ as $h$ goes to $0$ for $\varepsilon$ fixed. From \eqref{a23} and $\lambda_{\ell} ( Q ) \sim \pi^{2} \ell^{2}$, we obtain
\begin{align*}
\big\< Q V_{k}^{\varepsilon} , V_{k}^{\varepsilon} \big\> &= \big\< Q V_{k}^{-} , V_{k}^{-} \big\> + \big\< Q V_{k}^{0} , V_{k}^{0} \big\> + \big\< Q V_{k}^{+} , V_{k}^{+} \big\>  \\
&\geq \pi^{2} k^{2} \Vert V_{k}^{0} \Vert^{2} + \pi^{2} ( k + 1 )^{2} \Vert V_{k}^{+} \Vert^{2} + o_{\varepsilon \to 0} ( 1 ) + o_{h \to 0}^{\varepsilon} ( 1 ) .
\end{align*}
The two last inequalities and the properties of $V_{k}^{\varepsilon}$ yield
\begin{equation*}
\pi^{2} k^{2} \Vert V_{k}^{0} \Vert^{2} + \pi^{2} ( k + 1 )^{2} \Vert V_{k}^{+} \Vert^{2} \leq \pi^{2} k^{2} \Vert V_{k}^{0} \Vert^{2} + \pi^{2} k^{2} \Vert V_{k}^{+} \Vert^{2} + o_{\varepsilon \to 0} ( 1 ) + o_{h \to 0}^{\varepsilon} ( 1 ) ,
\end{equation*}
and then $\Vert V_{k}^{+} \Vert = o_{\varepsilon \to 0} ( 1 ) + o_{h \to 0}^{\varepsilon} ( 1 )$. Summing up,
\begin{align*}
V_{k} &= V_{k}^{\varepsilon} + o_{\varepsilon \to 0} ( 1 ) = \one_{\{ \lambda_{k} ( Q ) \}} ( Q ) V_{k}^{\varepsilon} + o_{\varepsilon \to 0} ( 1 ) + o_{h \to 0}^{\varepsilon} ( 1 ) \\
&= \one_{\{ \lambda_{k} ( Q ) \}} ( Q ) V_{k} + o_{\varepsilon \to 0} ( 1 ) + o_{h \to 0}^{\varepsilon} ( 1 ) .
\end{align*}
Coming back to the original variables and using that the function $v_{k}$ is independent of $\varepsilon$, we deduce $v_{k} = \one_{\{ \lambda_{k} ( P ) \}} ( P ) v_{k} + o_{h \to 0} ( 1 )$ which implies the induction hypothesis for $k$.
\end{proof}

\section{A generalization in higher dimensions} \label{s5}

Here, we study operators $P$ as in \eqref{a25} on $\R^{d}$ with $d \geq 1$ where the potential $V$ can be written
\begin{equation} \label{a13}
V ( x ) = V_{0} \left( \vert x \vert \theta ( \widehat{x} ) \right) ,
\end{equation}
and $\widehat{x} = x \vert x \vert^{- 1} \in \S^{d - 1}$ is the angle of $x$. We assume that $V_{0} \in C^{0} ( \R_{+} ; \R_{+} )$ satisfies $V^{- 1}_{0} ( 0 ) = \{ 0 \}$, $\liminf_{x \to + \infty} V_{0} ( x ) > 0$, $V_{0}$ is flat at $0$ and $x \mapsto \vert x \vert^{- n} V_{0} ( x )$ is increasing for all $n \in \N$ and small positive $x$. We also suppose that $\theta$ belongs to $C^{0} ( \S^{d - 1} ; \R_{+}^{*} )$. Let $\Omega$ denote the star-shaped, bounded open set defined by
\begin{equation*}
\Omega = \{ x \in \R^{d} ; \ \vert x \vert < 1 / \theta ( \widehat{x} ) \} .
\end{equation*}
Mimicking \eqref{a9}, let $\delta ( h ) \in ] 0 , + \infty [$ be the unique solution of
\begin{equation} \label{a11}
\delta^{2} V_{0} ( \delta ) = h^{2} ,
\end{equation}
for $h$ small enough.

\begin{theorem}[Spectral asymptotic in dimension $d$]\sl \label{a12}
Let $P$ satisfy the previous assumptions. In the limit $h \to 0$, we have
\begin{equation*}
\lambda_{k} ( P ) \sim \frac{h^{2}}{\delta^{2}} \lambda_{k} ( - \Delta_{\Omega}^{D} ) ,
\end{equation*}
for all $k \in \N^{*}$, where $- \Delta_{\Omega}^{D}$ is the Dirichlet Laplacian on $\Omega$.
\end{theorem}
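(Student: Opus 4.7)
The plan is to mimic the one-dimensional argument of Theorem~\ref{a10} using the isotropic dilation by $\delta$ in place of the affine rescaling \eqref{a22}. Let $U f(x) = \delta^{-d/2} f(x/\delta)$, which is unitary on $L^{2}(\R^{d})$. Setting $s(x) := |x|\theta(\widehat{x})$, so that $\Omega = \{s<1\}$, and using \eqref{a11}, a direct computation gives
\begin{equation*}
Q := \frac{\delta^{2}}{h^{2}} U^{-1} P U = -\Delta + W_{h}(x), \qquad W_{h}(x) = \frac{V_{0}(\delta s(x))}{V_{0}(\delta)}.
\end{equation*}
For each $\varepsilon \in (0,1/2)$ I introduce the star-shaped open sets $\Omega_{\varepsilon}^{\pm} := \{s < 1 \pm \varepsilon\}$, which satisfy $\overline{\Omega_{\varepsilon}^{-}} \subset \Omega \subset \Omega_{\varepsilon}^{+}$, and I seek to show that $W_{h}$ is very small on the first and very large outside the second.

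\textbf{Sandwich estimate.} The first step is to prove the exact analogs of Lemmas~\ref{a18} and~\ref{a3}. Because $s$ is linear along rays and $V_{0}$ is flat at $0$ with $r \mapsto r^{-n}V_{0}(r)$ increasing for small $r>0$, I obtain
\begin{equation*}
0 \le W_{h}(x) \le m(h) := \frac{V_{0}((1-\varepsilon)\delta)}{V_{0}(\delta)} \xrightarrow[h\to 0]{} 0, \qquad x \in \overline{\Omega_{\varepsilon}^{-}},
\end{equation*}
\begin{equation*}
W_{h}(x) \ge M(h) := \frac{V_{0}((1+\varepsilon)\delta)}{V_{0}(\delta)} \xrightarrow[h\to 0]{} +\infty, \qquad x \in (\Omega_{\varepsilon}^{+})^{C}.
\end{equation*}
The small-$W_{h}$ bound comes from the flatness of $V_{0}$ (exactly the content of Lemma~\ref{a18} applied to $V_{0}$), while $M(h)\to\infty$ follows from $V_{0}((1+\varepsilon)\delta)/V_{0}(\delta) \ge (1+\varepsilon)^{n}$ for arbitrary $n\in\N$. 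These bounds yield, in the sense of forms,
\begin{equation*}
-\Delta + M(h)\one_{(\Omega_{\varepsilon}^{+})^{C}} \le Q \le -\Delta_{\Omega_{\varepsilon}^{-}}^{D} + m(h),
\end{equation*}
and the maximin principle then gives
\begin{equation*}
\lambda_{k}\bigl(-\Delta + M(h)\one_{(\Omega_{\varepsilon}^{+})^{C}}\bigr) \le \lambda_{k}(Q) \le \lambda_{k}(-\Delta_{\Omega_{\varepsilon}^{-}}^{D}) + m(h).
\end{equation*}

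\textbf{Double limit and main obstacle.} It remains to pass to the limit $h\to 0$ and then $\varepsilon\to 0$. For fixed $\varepsilon$, the large coupling constant theory applied to the bounded open set $\Omega_{\varepsilon}^{+}$ (as in the one-dimensional proof via \cite{AshHar82,BrCa02_01}) yields $\lambda_{k}(-\Delta + M(h)\one_{(\Omega_{\varepsilon}^{+})^{C}}) \longrightarrow \lambda_{k}(-\Delta_{\Omega_{\varepsilon}^{+}}^{D})$ as $h\to 0$, so that
\begin{equation*}
\lambda_{k}(-\Delta_{\Omega_{\varepsilon}^{+}}^{D}) \le \liminf_{h\to 0} \lambda_{k}(Q) \le \limsup_{h\to 0}\lambda_{k}(Q) \le \lambda_{k}(-\Delta_{\Omega_{\varepsilon}^{-}}^{D}).
\end{equation*}
Since $\lambda_{k}(P) = (h^{2}/\delta^{2})\lambda_{k}(Q)$, the theorem follows once one shows $\lambda_{k}(-\Delta_{\Omega_{\varepsilon}^{\pm}}^{D}) \to \lambda_{k}(-\Delta_{\Omega}^{D})$ as $\varepsilon \to 0$. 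The main obstacle is precisely this last step in dimension $d\ge 2$: convergence for $\Omega_{\varepsilon}^{-}\nearrow \Omega$ follows painlessly from monotone form convergence (domain monotonicity gives one-sided inequalities and the eigenfunctions of $-\Delta_{\Omega}^{D}$ can be approximated from inside), while $\Omega_{\varepsilon}^{+}\searrow \Omega$ requires absence of capacity defects near $\partial\Omega$. Fortunately, $\Omega$ is star-shaped with boundary $\{|x|=1/\theta(\widehat{x})\}$ parametrized by the continuous positive function $1/\theta$ on $\S^{d-1}$, which is enough to guarantee the required stability of Dirichlet eigenvalues — either via a general Mosco-convergence argument, or directly by extending each eigenfunction of $-\Delta_{\Omega_{\varepsilon}^{+}}^{D}$ and rescaling it radially by $(1+\varepsilon)^{-1}$ to produce an admissible test function on $\Omega$ realizing the desired eigenvalue up to $o_{\varepsilon\to 0}(1)$.
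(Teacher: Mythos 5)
Your rescaling, the introduction of the sets $\Omega_{\varepsilon}^{\pm}$, the potential bounds, the operator sandwich, the maximin inequalities, and the use of large-coupling asymptotics all match the paper's proof. The one place you go astray is the final step, which you present as "the main obstacle" requiring Mosco convergence or capacity estimates near $\partial\Omega$: in fact there is no obstacle at all, because $\Omega_{\varepsilon}^{\pm}=\{s<1\pm\varepsilon\}$ is nothing but the dilation $(1\pm\varepsilon)\Omega$ (since $s$ is positively homogeneous of degree $1$), so by exact scaling invariance of the Dirichlet Laplacian
\begin{equation*}
\lambda_{k}\bigl(-\Delta_{\Omega_{\varepsilon}^{\pm}}^{D}\bigr)=(1\pm\varepsilon)^{-2}\,\lambda_{k}\bigl(-\Delta_{\Omega}^{D}\bigr),
\end{equation*}
with no error term. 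This is exactly what the paper uses. Your alternative suggestion ("rescale radially by $(1+\varepsilon)^{-1}$") is in fact this scaling argument, but you describe it as producing a test function realizing the eigenvalue only up to $o_{\varepsilon\to 0}(1)$, when the pullback is a genuine unitary conjugation giving an exact identity. So nothing is wrong logically, and your fallback argument would also work, but the appeal to domain-stability results is unnecessary and obscures the cleanest point of the proof: the level sets of $s$ are dilates of one another, which reduces the $\varepsilon\to 0$ limit to a one-line algebraic fact rather than a theorem about spectral stability of Dirichlet Laplacians under domain perturbation.
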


Note that the eigenvalues of $- \Delta_{\Omega}^{D}$ are positive. Radial potentials can be considered taking $\theta = 1$. In that case, $\Omega$ is the unit ball $B ( 0 , 1 )$. On the other hand, Theorems \ref{a10} and \ref{a12} provide similar results in dimension $d = 1$ under the present assumptions. Indeed, direct computations show $\delta ( h ) \sim \pm \theta ( \pm 1 ) \delta_{\pm} ( h )$ and $\Omega = ( 1 / \theta ( - 1 ) , 1 / \theta ( + 1 ) )$. Finally, if we assume $V ( x ) = V_{0} ( \vert x \vert ) \theta ( \widehat{x} )$ instead of \eqref{a13}, one can verify that $\lambda_{k} ( P ) \sim h^{2} \delta^{- 2} \lambda_{k} ( - \Delta_{B ( 0 , 1 )}^{D} )$. In other words, $\theta$ plays no role and the geometry (given by $\Omega$) disappears.

\begin{proof}
The proof is similar to the one of Theorem \ref{a10}. Let us consider the unitary transformation on $L^{2} ( \R^{d} )$ given by
\begin{equation*}
( U f ) ( x ) = \delta^{- \frac{d}{2}} f \Big( \frac{x}{\delta} \Big) .
\end{equation*}
As in \eqref{a8}, the rescaled operator is defined by
\begin{equation} \label{a6}
Q : = \frac{\delta^{2}}{h^{2}} U^{- 1} P U = - \Delta + W_{h} ( x ) ,
\end{equation}
where the potential $W_{h}$ satisfies
\begin{equation*}
W_{h} ( x ) = \frac{\delta^{2}}{h^{2}} V ( \delta x ) = \frac{\delta^{2}}{h^{2}} V_{0} \big( \delta \vert x \vert \theta ( \widehat{x} ) \big) .
\end{equation*}
As in Lemma \ref{a19}, for all $0 < \varepsilon < 1 / 2$, there exist two functions $m ( h ) , M ( h )$ with $m ( h ) \to 0$ and $M ( h ) \to + \infty$ as $h$ goes to $0$ such that
\begin{equation} \label{a14}
\begin{aligned}
&0 \leq W_{h} ( x ) \leq m ( h ) &&\text{ for all } x \in ( 1 - \varepsilon ) \Omega ,  \\
&W_{h} ( x ) \geq M ( h ) &&\text{ for all } x \notin ( 1 + \varepsilon ) \Omega  .
\end{aligned}
\end{equation}
Indeed, for $x \in ( 1 - \varepsilon ) \Omega$, we have $\delta \vert x \vert \theta ( \widehat{x} ) \leq ( 1 - \varepsilon ) \delta$. Then, Lemma \ref{a18} and \eqref{a11} give
\begin{equation*}
W_{h} ( x ) \leq \frac{\delta^{2}}{h^{2}} V_{0} ( ( 1 - \varepsilon ) \delta ) \leq \frac{\delta^{2}}{h^{2}} V_{0} ( \delta ) m ( h ) = m ( h ) .
\end{equation*}
The same way, for $x \notin ( 1 + \varepsilon ) \Omega$, we have $\delta \vert x \vert \theta ( \widehat{x} ) \geq ( 1 + \varepsilon ) \delta$. Then, Lemma \ref{a18} and \eqref{a11} give
\begin{equation*}
W_{h} ( x ) \geq \frac{\delta^{2}}{h^{2}} V_{0} ( ( 1 + \varepsilon ) \delta ) \geq \frac{\delta^{2}}{h^{2}} V_{0} ( \delta ) M ( h ) = M ( h ) .
\end{equation*}
From \eqref{a14}, we deduce as in \eqref{a21} that
\begin{equation*}
- \Delta_{\R^{d}} + M ( h ) \one_{( 1 + \varepsilon ) \Omega^{C}} \leq Q \leq - \Delta_{( 1 - \varepsilon ) \Omega}^{D} + m ( h ) ,
\end{equation*}
in the form sense. Then, the maximin principle yields
\begin{equation} \label{a15}
\lambda_{k} \big( - \Delta_{\R^{d}} + M ( h ) \one_{( 1 + \varepsilon ) \Omega^{C}} \big) \leq \lambda_{k} ( Q ) \leq \lambda_{k} \big( - \Delta_{( 1 - \varepsilon ) \Omega}^{D} + m ( h ) \big) ,
\end{equation}
for all $k \in \N^{*}$.

By scaling invariance, we have
\begin{equation} \label{a17}
\lambda_{k} \big( - \Delta_{( 1 - \varepsilon ) \Omega}^{D} + m ( h ) \big) = ( 1 - \varepsilon )^{- 2} \lambda_{k} ( - \Delta_{\Omega}^{D} ) + o_{h \to 0}^{\varepsilon} ( 1 ) ,
\end{equation}
where $o_{h \to 0}^{\varepsilon} ( 1 )$ denotes a function which goes to $0$ as $h$ goes to $0$ for $\varepsilon$ fixed. On the other hand, by the theory of large coupling constant (see \cite{Si78_01}, \cite{GeGuHoKlSaSiVo88_01} or \cite{De91_01}), the spectrum of $- \Delta_{\R^{d}} + M \one_{( 1 + \varepsilon ) \Omega^{C}}$ converges to the one of $- \Delta_{( 1 + \varepsilon ) \Omega}^{D}$ as $M$ goes to $+ \infty$. Thus,
\begin{align}
\lambda_{k} \big( - \Delta_{\R^{d}} + M ( h ) \one_{( 1 + \varepsilon ) \Omega^{C}} \big) &= \lambda_{k} \big( - \Delta_{( 1 + \varepsilon ) \Omega}^{D} \big) + o_{h \to 0}^{\varepsilon} ( 1 )   \nonumber \\
&= ( 1 + \varepsilon )^{- 2} \lambda_{k} ( - \Delta_{\Omega}^{D} ) + o_{h \to 0}^{\varepsilon} ( 1 ) . \label{a16}
\end{align}
Combining \eqref{a15} with \eqref{a6}, \eqref{a17} and \eqref{a16}, we get
\begin{equation*}
( 1 + \varepsilon )^{- 2} \lambda_{k} ( - \Delta_{\Omega}^{D} ) + o_{h \to 0}^{\varepsilon} ( 1 ) \leq \frac{\delta^{2}}{h^{2}} \lambda_{k} ( P ) \leq ( 1 - \varepsilon )^{- 2} \lambda_{k} ( - \Delta_{\Omega}^{D} ) + o_{h \to 0}^{\varepsilon} ( 1 ).
\end{equation*}
Letting $\varepsilon$ goes to $0$, this inequality and $\lambda_{k} ( - \Delta_{\Omega}^{D} ) > 0$ imply Theorem \ref{a12}.
\end{proof}

\bibliographystyle{amsplain}
\providecommand{\MR}{\relax\ifhmode\unskip\space\fi MR }
% \MRhref is called by the amsart/book/proc definition of \MR.
\providecommand{\MRhref}[2]{%
  \href{http://www.ams.org/mathscinet-getitem?mr=#1}{#2}
}
\providecommand{\href}[2]{#2}

\end{document}